\documentclass[11pt,reqno]{amsart}

\usepackage[T1]{fontenc}
\usepackage[utf8]{inputenc}
\usepackage{amsmath,amssymb,amsfonts,amsthm,mathtools}
\usepackage[margin=1.15in]{geometry}
\usepackage[colorlinks=true,linkcolor=blue,citecolor=blue,urlcolor=blue]{hyperref}

\numberwithin{equation}{section}

\newtheorem{theorem}{Theorem}[section]
\newtheorem{proposition}[theorem]{Proposition}
\newtheorem{conjecture}[theorem]{Conjecture}
\theoremstyle{remark}
\newtheorem{remark}[theorem]{Remark}

\DeclareMathOperator{\tr}{tr}
\newcommand{\R}{\mathbb{R}}
\newcommand{\Id}{\mathrm{Id}}
\newcommand{\ii}{\mathrm{i}}
\newcommand{\e}{\mathrm{e}}
\newcommand{\dd}{\,\mathrm{d}}
\newcommand{\norm}[1]{\left\|#1\right\|}

\newcommand{\abs}[1]{\left|#1\right|}

\title[Counterexamples to a velocity-only kinetic Schauder estimate]{Several counterexamples to kinetic Schauder estimates}

\author{Hongjie Dong}
\address{Division of Applied Mathematics, Brown University, Rhode Island, USA, 02912}
\email{hongjie\_dong@brown.edu}

\author{Weinan Wang}
\address{Department of Mathematics, University of Oklahoma, Norman, USA, 73072}
\email{ww@ou.edu}

\subjclass[2020]{35B65, 35H10, 35Q84}
\keywords{kinetic Fokker--Planck equation, Schauder estimates, hypoelliptic regularity, counterexample}
\thanks{H. Dong was partially supported by the NSF under agreement DMS-2350129. W. Wang was partially supported by the Simons Foundation TSM grant (No. 0007730).}

\date{\today}

\begin{document}

\begin{abstract}
In this paper, we construct counterexamples to the
Schauder estimate for kinetic Fokker--Planck equations, disproving
\cite[Conjecture~1.3]{HW}.  The conjectured estimate would control second
velocity derivatives using only velocity H\"older regularity of the
coefficients and forcing, with no spatial regularity.  We also give a zero-forcing variant, where the
same {conditions are imposed on} a bounded zeroth-order coefficient.  Finally,
in dimensions $d\geq 2$, we give endpoint examples showing that bounded
velocity-independent forcing may produce bounded distributional stationary
solutions whose second velocity derivatives are not locally bounded. 
\end{abstract}

\maketitle

\section{Introduction}

In this paper, we study the regularity of kinetic Fokker--Planck equations
\begin{equation}\label{eq:kfp-intro}
   (\partial_t+v\cdot\nabla_x)f=\tr\!\left(a\,D_v^2 f\right)+c\,f+g,
   \qquad (t,x,v)\in\R\times\R^d\times\R^d,
\end{equation}
the linear equations that underlie the regularity theory of the Landau and Boltzmann equations of kinetic theory \cite{Villani02}.  The operator in \eqref{eq:kfp-intro} is degenerate---elliptic in $v$ only---but the transport field $\partial_t+v\cdot\nabla_x$ transfers velocity regularity into the space and time variables, rendering it hypoelliptic.  This was discovered by Kolmogorov \cite{Kolmogorov}, who computed the fundamental solution in the constant-coefficient case, and explained in general by H\"ormander \cite{Hormander}; see \cite{Bouchut} for the transfer of regularity.  The attendant quantitative theory is organized by the kinetic dilations
\[
(t,x,v)\mapsto(r^2t,\,r^3x,\,rv),\qquad r>0,
\]
under which H\"older regularity $C_v^\alpha$ in velocity is paired with $C_x^{\alpha/3}$ in space and $C_t^{\alpha/2}$ in time.  Within this scaling, Schauder and De Giorgi--Nash--Moser estimates have been developed by many authors \cite{GIMV,ImbertMouhot,ImbertSilvestreSchauder,BramantiBrandolini,DiFrancescoPolidoro,Manfredini,HaoWuZhang,Zhu}; see \cite{SilvestreOpen} for a survey.

In \cite{HW}, Henderson and the second named author of this present paper proved a kinetic
Schauder estimate requiring only measurability in time, with H\"older
regularity \(C_x^{\alpha/3}C_v^\alpha\) in the remaining variables,
and applied it to weak--strong uniqueness for the spatially
inhomogeneous Landau equation.  That estimate was modeled on the
interior parabolic Schauder estimates of Brandt \cite{Brandt} and
Knerr \cite{Knerr}; see also Lieberman \cite{Lieberman}.  A closely
related kinetic Schauder estimate, with coefficients measurable in
time and H\"older continuous in the intrinsic spatial {and velocity} variables, was
also obtained by Biagi and Bramanti \cite{BiagiBramanti}.  {See also a global kinetic Schauder estimates by the first named author and Yastrzhembskiy in \cite{DongYastrzhembskiySchauder} by using a kernel-free approach, where the time regularity of $D_v^2 f$ was also derived.}
It was then natural to ask whether the spatial regularity, like the temporal
regularity, is dispensable in the kinetic setting: can \(D_v^2 f\) be
controlled in \(C_v^\alpha\) by the velocity H\"older seminorms of the
coefficients and forcing alone, with no spatial seminorm on the
right-hand side?  This question was made precise in \cite[Conjecture~1.3]{HW}.

This question sits at the endpoint of a broader positive theory whose
purpose is to lower the regularity assumptions in kinetic Schauder
estimates.  Classical intrinsic Schauder estimates for Kolmogorov--type
operators, going back to Manfredini, Di Francesco--Polidoro,
Bramanti--Brandolini, and Priola, already use the anisotropic geometry
of the equation \cite{Manfredini,DiFrancescoPolidoro,BramantiBrandolini,Priola}.
More recent sharp or kinetic versions, including the estimates of
Imbert--Mouhot, Imbert--Silvestre, and Chaudru de Raynal--Honor\'e--Menozzi,
are likewise formulated in intrinsic H\"older spaces
\cite{ImbertMouhot,ImbertSilvestreSchauder,ChaudruHonoreMenozzi}.  The
recent frontier has been to weaken specific parts of these hypotheses:
time regularity is removed in Henderson--Wang, Biagi--Bramanti,
Dong--Yastrzhembskiy, and Lucertini--Pagliarani--Pascucci
\cite{HW,BiagiBramanti,DongYastrzhembskiySchauder,LucertiniPagliaraniPascucciRegularity,LucertiniPagliaraniPascucciSchauder},
while the spatial H\"older modulus can be replaced by Dini or
log-Dini moduli in the work of Polidoro--Rebucci--Stroffolini and
Biagi--Bramanti--Stroffolini
\cite{PolidoroRebucciStroffolini,BiagiBramantiStroffolini}.  In a
Sobolev direction, the first named author of this present paper and Yastrzhembskiy developed global \(L^p\)
theories under VMO-type assumptions {with respect to $(x,v)$} in nondivergence and divergence
form \cite{DongYastrzhembskiyLpND,DongYastrzhembskiyLpD}; see also the
fundamental-solution work of Bramanti--Polidoro for time-dependent
measurable coefficients \cite{BramantiPolidoro}.  Thus many works push
toward lower or more intrinsic non-time regularity.  What they do not
remove, in any Schauder estimate controlling \(D_v^2 f\) in H\"older
spaces, is the {assumption} of a spatial modulus in the kinetic variables.
The present note settles the endpoint relevant to \cite[Conjecture~1.3]{HW}:
if that spatial modulus is removed completely and only velocity
H\"older information is kept, the general linear estimate is false.

To recall it, for $r>0$ write
\[
   Q_r=(-r^2,0]\times B_{r^3}\times B_r\subset\R\times\R^d\times\R^d,
\]
and, for a bounded function $h$ on a cylinder $Q$, let
\[
   [h]_{C_v^\alpha(Q)}
   :=\sup_{\substack{(t,x,v),(t,x,v')\in Q\\0<|v-v'|<1/2}}
       \frac{|h(t,x,v)-h(t,x,v')|}{|v-v'|^\alpha}
\]
denote its H\"older seminorm in the velocity variable.  We use the analogous notation
\[
   [h]_{C_x^\beta(Q)}
   :=\sup_{\substack{(t,x,v),(t,x',v)\in Q\\0<|x-x'|<1/2}}
       \frac{|h(t,x,v)-h(t,x',v)|}{|x-x'|^\beta}
\]
for spatial H\"older seminorms.  For matrix-valued functions we use the
Frobenius norm,
\[
   |M|=\left(\sum_{i,j=1}^d |M_{ij}|^2\right)^{1/2}.
\]

\begin{conjecture}[{\cite[Conjecture~1.3]{HW}}]\label{conj:false}
Let $\alpha\in(0,1)$ and $\Lambda>1$.  There is a constant
$C=C(d,\alpha,\Lambda)$ such that every smooth solution $f$ of
\eqref{eq:kfp-intro} in $Q_1$ satisfying
\[
   \Lambda^{-1}\Id\le a\le\Lambda\Id,
   \qquad |c|\le\Lambda,
   \qquad |g|\le\Lambda,
\]
and with \(f,D_v^2f,a,c,g\in C_v^\alpha(Q_1)\), obeys
\begin{equation}\label{eq:false-estimate}
\begin{aligned}
   [D_v^2f]_{C_v^\alpha(Q_{1/2})}
   &\le C\Big(1+[c]_{C_v^\alpha(Q_1)}+[a]_{C_v^\alpha(Q_1)}^{1+2/\alpha}\Big)
        \norm{f}_{L^\infty(Q_1)} \\
   &\quad +C\Big(1+[a]_{C_v^\alpha(Q_1)}\Big)[g]_{C_v^\alpha(Q_1)}.
\end{aligned}
\end{equation}
\end{conjecture}
In the nonlinear problems that motivate the theory---regularity and uniqueness for the Landau and non-cutoff Boltzmann equations \cite{ImbertSilvestreBoltzmann,Silvestre17,Fournier}---the coefficients are determined by the solution, and controlling their \emph{spatial} regularity is typically the delicate step; a velocity-only estimate would remove that burden.  Having already dispensed with time regularity entirely, one might hope the spatial regularity is similarly inessential.

We show that it is not.  Conjecture~\ref{conj:false} is false.  Here and below, $N$ is a large positive spatial frequency, which will be sent to infinity.  For every $\alpha\in(0,1)$ and every $d\ge1$ we construct (Theorem~\ref{thm:main}) smooth, time-independent solutions $f_N$ of \eqref{eq:kfp-intro} on the unit cylinder, with $a\equiv\Id$, $c\equiv0$, and a bounded forcing $g_N$ independent of $v$, for which
\[
   \norm{f_N}_{L^\infty(Q_1)}\le C_0\,N^{-2/3},
   \qquad [g_N]_{C_v^\alpha(Q_1)}=0,
   \qquad [D_v^2 f_N]_{C_v^\alpha(Q_{1/2})}\ge c_0\,N^{\alpha/3}.
\]
Every velocity H\"older seminorm on the right-hand side of \eqref{eq:false-estimate} vanishes for these data, so the right-hand side reduces to a multiple of $\norm{f_N}_{L^\infty}\to0$ while the left-hand side diverges.

The construction is elementary.  For this frequency $N$, in one active pair of variables we take
\[
   f_N(x,v)=\operatorname{Re}\!\left(\e^{\ii Nx_1}I_N(v_1)\right),
   \qquad
   I_N(v)=\int_0^\infty\exp\!\left(-\frac{N^2 s^3}{3}\right)\e^{-\ii N s v_1}\dd s,
\]
with forcing $g_N=\cos(Nx)$; the identity $I_N''(v)-\ii N v\,I_N(v)=-1$, obtained by differentiating the integrand in $s$, makes $f_N$ an exact solution.  The forcing is independent of $v$ and hence invisible to $C_v^\alpha$.  But the rescaling $I_N(v)=N^{-2/3}J(N^{1/3}v)$, with $J(y)=\int_0^\infty\e^{-r^3/3}\e^{-\ii ry}\dd r$, gives $I_N''(v)=J''(N^{1/3}v)$, a profile that varies by an order-one amount across a velocity distance $N^{-1/3}$; this forces $[D_v^2 f_N]_{C_v^\alpha}\gtrsim N^{\alpha/3}$.

The example also identifies the missing quantity.  Although
\(g_N(x,v)=\cos(Nx_1)\) is invisible to the velocity seminorm,
\[
   [g_N]_{C_x^{\alpha/3}(Q_1)}\simeq N^{\alpha/3},
\]
which is exactly the size of the lower bound for
\([D_v^2f_N]_{C_v^\alpha(Q_{1/2})}\).  Thus the construction is
consistent with the \(C_x^{\alpha/3}C_v^\alpha\) Schauder estimate of
\cite{HW,DongYastrzhembskiySchauder}, but shows that the spatial component of the anisotropic
kinetic modulus cannot be omitted in general.

The first construction is tailored to the precise H\"older seminorm in
Conjecture~\ref{conj:false}.  We also include variants that clarify the
nature of the obstruction.  Section~\ref{sec:zero-forcing} places the
oscillation in the zeroth-order coefficient and sets \(g\equiv0\).  In
Section~\ref{sec:Linfty-endpoint}, we show that in dimensions \(d\ge2\) the
endpoint failure is even more basic: bounded velocity-independent forcing for
the constant-coefficient Kolmogorov equation may produce bounded 
{strong} stationary solutions without bounded second velocity derivatives.  We first give a smooth
approximating family with uniformly bounded forcing and solutions but
unbounded velocity Hessian norms.  We then give two actual endpoint examples:
one sign-type example, visible from the second-order Riesz kernel, and one
potential-theoretic example obtained from a compactly supported potential with
bounded {and continuous} Laplacian but unbounded mixed Hessian.  These endpoint constructions
reinforce the same message: without a spatial modulus, bounded data are not sufficient to
control \(D_v^2 f\).

All examples are linear, with 
{given} coefficients, and do not
affect the main results of \cite{HW}.  In particular, they are not solutions
of the Landau equation, and they do not settle the corresponding weak--strong
uniqueness question, \cite[Conjecture 1.4]{HW}.  They do, however, rule out
the proposed route through a generic velocity-only linear Schauder estimate.

\section{The counterexamples}

\begin{theorem}\label{thm:main}
Let \(d\ge1\) and \(\alpha\in(0,1)\), and set
\[
   C_0:=\int_0^\infty e^{-r^3/3}\,dr=3^{-2/3}\Gamma(1/3).
\]
There exist constants \(c_0>0\), a number \(N_0\ge1\), and, for every
frequency parameter \(N\ge N_0\), a smooth real-valued stationary solution \(f_N\) of
\begin{equation}\label{eq:kfp-special}
   (\partial_t+v\cdot\nabla_x)f_N=\Delta_v f_N+g_N
   \qquad\text{in }Q_1,
\end{equation}
with \(\|g_N\|_{L^\infty(Q_1)}\le1\) and
\([g_N]_{C_v^\alpha(Q_1)}=0\), such that
\begin{equation}\label{eq:main-bounds}
   \|f_N\|_{L^\infty(Q_1)}\le C_0N^{-2/3},
   \qquad
   [D_v^2f_N]_{C_v^\alpha(Q_{1/2})}\ge c_0N^{\alpha/3}.
\end{equation}
Consequently the estimate \eqref{eq:false-estimate} is false.
\end{theorem}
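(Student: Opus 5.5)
We follow the construction sketched in the introduction. Set
\[
   J(y)=\int_0^\infty \e^{-r^3/3}\e^{-\ii ry}\dd r,\qquad
   I_N(w)=\int_0^\infty \exp\!\Big(-\frac{N^2s^3}{3}\Big)\e^{-\ii Nsw}\dd s=N^{-2/3}J(N^{1/3}w).
\]
The second formula follows from the substitution $r=N^{2/3}s$. We then define
\[
   f_N(t,x,v)=\operatorname{Re}\big(\e^{\ii Nx_1}I_N(v_1)\big),\qquad g_N=\cos(Nx_1).
\]
Since $g_N$ is independent of $v$, we have $[g_N]_{C_v^\alpha}=0$ and $\|g_N\|_\infty\le1$. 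Differentiation under the integral sign is justified by the super-exponential decay in $s$, so $I_N$ is smooth, and hence $f_N$ is smooth.

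\textbf{The equation.} We first check $I_N''(w)-\ii NwI_N(w)=-1$. Indeed,
\[
   I_N''-\ii NwI_N=\int_0^\infty(-N^2s^2-\ii Nw)\,\e^{-N^2s^3/3-\ii Nsw}\dd s
   =\int_0^\infty\partial_s\big(\e^{-N^2s^3/3-\ii Nsw}\big)\dd s=-1.
\]
Now set $F=\e^{\ii Nx_1}I_N(v_1)$. Then $v\cdot\nabla_xF=\ii Nv_1F$ and $\Delta_vF=\e^{\ii Nx_1}I_N''(v_1)$, so
\[
   \Delta_vF-v\cdot\nabla_xF=\e^{\ii Nx_1}\big(I_N''-\ii Nv_1I_N\big)=-\e^{\ii Nx_1}.
\]
Taking real parts and using $\partial_tf_N=0$ gives \eqref{eq:kfp-special}. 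For the $L^\infty$ bound, $|I_N(w)|\le\int_0^\infty \e^{-N^2s^3/3}\dd s=N^{-2/3}C_0$, so $\|f_N\|_{L^\infty}\le C_0N^{-2/3}$.

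\textbf{Lower bound on the Hessian seminorm.} We have $\partial_{v_1}^2f_N=\operatorname{Re}\big(\e^{\ii Nx_1}J''(N^{1/3}v_1)\big)$, where $J''(y)=-\int_0^\infty r^2\e^{-r^3/3}\e^{-\ii ry}\dd r$. Take $x=0$ (the point $x=0$ lies in $B_{1/8}$). The quantity to bound below is
\[
   h(y)=\operatorname{Re}J''(y)=-\int_0^\infty r^2\e^{-r^3/3}\cos(ry)\dd r.
\]
Here $h(0)=-1$. Since $r^2\e^{-r^3/3}$ is integrable, Riemann--Lebesgue gives $h(y)\to0$ as $y\to\infty$. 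So there is a fixed $y_0>0$ with $|h(y_0)-h(0)|\ge1/2$.

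Choose $v=0$ and $v'=N^{-1/3}y_0e_1$. These lie in $B_{1/2}$, with $|v-v'|<1/2$, once $N\ge N_0$. Using the Frobenius norm $|D_v^2f|\ge|\partial_{v_1}^2f|$, we get
\[
   [D_v^2f_N]_{C_v^\alpha(Q_{1/2})}\ge\frac{1/2}{(N^{-1/3}y_0)^\alpha}=\tfrac12y_0^{-\alpha}N^{\alpha/3},
\]
so \eqref{eq:main-bounds} holds with $c_0=\tfrac12y_0^{-\alpha}$. To avoid invoking Riemann--Lebesgue, one could instead integrate by parts once to get $|h(y)|\lesssim 1/y$; this yields an explicit $y_0$.

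\textbf{Contradiction with the conjecture.} Take $a=\Id$ and $c=0$. These satisfy the structural bounds for any $\Lambda>1$ and have vanishing velocity seminorms, and $|g_N|\le1\le\Lambda$. The velocity regularity assumptions on $f_N$, $D_v^2f_N$, $a$, $c$, $g_N$ hold because all are smooth with bounded derivatives on $Q_1$. The right side of \eqref{eq:false-estimate} is then $C\|f_N\|_\infty\le CC_0N^{-2/3}\to0$, while the left side is at least $c_0N^{\alpha/3}\to\infty$. This is a contradiction for large $N$.

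The only delicate point is the lower bound: we need a fixed profile for which $\operatorname{Re}J''$ genuinely varies. The Riemann--Lebesgue argument settles it cheaply; everything else is routine verification.
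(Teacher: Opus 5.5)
Your proposal is correct and follows essentially the same route as the paper. It uses the same profile $I_N(v_1)=N^{-2/3}J(N^{1/3}v_1)$, the same $\partial_s$ identity to verify the equation, and the same Riemann--Lebesgue choice of a fixed point at which $\operatorname{Re}J''$ differs from $\operatorname{Re}J''(0)=-1$ by at least $1/2$. Your extra remarks are valid minor refinements: the Frobenius norm dominates the $(1,1)$ entry, and integrating by parts gives $|h(y)|\lesssim 1/y$, which yields an explicit $y_0$.
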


\begin{proof}
It is enough to use one active spatial and velocity direction.  Thus all functions below depend only on $(x_1,v_1)$; the remaining variables are passive.  Fix a frequency parameter $N\ge1$ and define
\begin{equation}\label{eq:IN-def}
   I_N(\eta)=\int_0^\infty
      \exp\left(-\frac{N^2s^3}{3}\right)\e^{-\ii Ns\eta}\dd s,
\end{equation}
and set
\begin{equation}\label{eq:fN-gN-def}
   F_N(x,v)=\e^{\ii Nx_1}I_N(v_1),
   \qquad
   f_N(x,v)=\operatorname{Re}F_N(x,v),
   \qquad
   g_N(x,v)=\cos(Nx_1).
\end{equation}
These functions are independent of $t$.

We first verify the equation.  Let
\[
   E_N(s,\eta)=\exp\left(-\frac{N^2s^3}{3}\right)\e^{-\ii Ns\eta}.
\]
Then
\[
   \partial_sE_N(s,\eta)=(-N^2s^2-\ii N\eta)E_N(s,\eta).
\]
The exponential decay in $s$ justifies the following differentiations under the integral sign.  Differentiating \eqref{eq:IN-def} twice in $\eta$ gives
\[
\begin{aligned}
   I_N''(\eta)-\ii N\eta I_N(\eta)
   &=\int_0^\infty (-N^2s^2-\ii N\eta)
       \exp\left(-\frac{N^2s^3}{3}\right)\e^{-\ii Ns\eta}\dd s \\
   &=\int_0^\infty \partial_sE_N(s,\eta)\dd s
     =\lim_{s\to\infty}E_N(s,\eta)-E_N(0,\eta)=-1.
\end{aligned}
\]
Since $F_N$ depends only on $(x_1,v_1)$,
\[
   v\cdot\nabla_xF_N=\ii Nv_1\e^{\ii Nx_1}I_N(v_1),
   \qquad
   \Delta_vF_N=\e^{\ii Nx_1}I_N''(v_1).
\]
Therefore
\[
\begin{aligned}
   v\cdot\nabla_xF_N-\Delta_vF_N
   &=\e^{\ii Nx_1}\big(\ii Nv_1I_N(v_1)-I_N''(v_1)\big)
     =\e^{\ii Nx_1}.
\end{aligned}
\]
Taking real parts yields
\[
   (\partial_t+v\cdot\nabla_x)f_N=\Delta_vf_N+g_N,
\]
which proves \eqref{eq:kfp-special}.  Moreover $g_N$ is independent of $v$, hence
\[
   [g_N]_{C_v^\alpha(Q_1)}=0,
   \qquad
   \norm{g_N}_{L^\infty(Q_1)}\le1.
\]

Next, we estimate the size of $f_N$.  From \eqref{eq:IN-def},
\[
   |I_N(\eta)|\le \int_0^\infty \exp\left(-\frac{N^2s^3}{3}\right)\dd s.
\]
With the change of variables $r=N^{2/3}s$, this becomes
\[
   |I_N(\eta)|\le N^{-2/3}\int_0^\infty \e^{-r^3/3}\dd r
   = C_0N^{-2/3},
\]
where
\[
   C_0=\int_0^\infty e^{-r^3/3}\,dr=3^{-2/3}\Gamma(1/3).
\]
Thus
\begin{equation}\label{eq:fN-Linfty}
   \norm{f_N}_{L^\infty(Q_1)}\le C_0N^{-2/3}.
\end{equation}

It remains to prove the lower bound for $[D_v^2f_N]_{C_v^\alpha}$.  
Define
\begin{equation}\label{eq:J-def}
   J(y)=\int_0^\infty \e^{-r^3/3}\e^{-\ii ry}\dd r.
\end{equation}
The same change of variables $r=N^{2/3}s$ gives
\begin{equation}\label{eq:I-J-scaling}
   I_N(\eta)=N^{-2/3}J(N^{1/3}\eta).
\end{equation}
Consequently
\begin{equation}\label{eq:I-second-J}
   I_N''(\eta)=J''(N^{1/3}\eta).
\end{equation}
Furthermore,
\begin{equation}\label{eq:J-second}
   J''(y)=-\int_0^\infty r^2\e^{-r^3/3}\e^{-\ii ry}\dd r.
\end{equation}
In particular,
\begin{equation}\label{eq:J-zero}
   J''(0)=-\int_0^\infty r^2\e^{-r^3/3}\dd r=-1,
\end{equation}
where the last equality follows from $u=r^3/3$.  Since $r^2\e^{-r^3/3}\in L^1(0,\infty)$, the Riemann--Lebesgue lemma gives $J''(Y)\to0$ as $Y\to\infty$.  Choose a fixed $Y>0$ such that
\begin{equation}\label{eq:Y-choice}
   \abs{\operatorname{Re}J''(Y)-\operatorname{Re}J''(0)}\ge \frac12.
\end{equation}
Set $N_0:=\lfloor 8Y^3\rfloor+1$, so that $YN^{-1/3}<1/2$ for every $N\ge N_0$; then both $0$ and $YN^{-1/3}e_1$ lie in $B_{1/2}$.  Evaluating at $t=0$ and $x=0$, we have by \eqref{eq:I-second-J}
\[
   \partial_{v_1v_1}f_N(0,0,v)=\operatorname{Re}J''(N^{1/3}v_1).
\]
Therefore
\[
\begin{aligned}
   [D_v^2f_N]_{C_v^\alpha(Q_{1/2})}
   &\ge
   \frac{\abs{\partial_{v_1v_1}f_N(0,0,YN^{-1/3}e_1)
          -\partial_{v_1v_1}f_N(0,0,0)}}{|YN^{-1/3}e_1|^\alpha} \\
   &\ge \frac{1/2}{Y^\alpha N^{-\alpha/3}}
    =c_0N^{\alpha/3},
\end{aligned}
\]
where $c_0=(2Y^\alpha)^{-1}$.  This proves \eqref{eq:main-bounds}.

Finally, suppose that the velocity-only Schauder estimate \eqref{eq:false-estimate} held.  Fix any $\Lambda>1$.  The functions above satisfy the hypotheses with this $\Lambda$, since $a\equiv\Id$, $c\equiv0$, and $\|g_N\|_{L^\infty(Q_1)}\le1<\Lambda$.  Applying \eqref{eq:false-estimate} gives
\[
   [D_v^2f_N]_{C_v^\alpha(Q_{1/2})}
   \le C(d,\alpha,\Lambda)\norm{f_N}_{L^\infty(Q_1)},
\]
because
\[
   [a]_{C_v^\alpha(Q_1)}=[c]_{C_v^\alpha(Q_1)}=[g_N]_{C_v^\alpha(Q_1)}=0.
\]
Using \eqref{eq:fN-Linfty}, this would imply
\[
   [D_v^2f_N]_{C_v^\alpha(Q_{1/2})}\le C(d,\alpha,\Lambda)C_0N^{-2/3}.
\]
Together with the lower bound above, we get
\[
   c_0N^{\alpha/3}\le C(d,\alpha,\Lambda)C_0N^{-2/3},
\]
which is impossible as $N\to\infty$.  Thus \eqref{eq:false-estimate} is false.
\end{proof}

\begin{remark}[A bounded forcing term does not fix the problem]\label{rem:g-Linfty}
The same example also rules out any uniform estimate obtained from
\eqref{eq:false-estimate} by adding a term depending only on
\(\|g\|_{L^\infty}\), while still omitting all \(x\)-regularity of
\(g\).  Indeed, \(\|g_N\|_{L^\infty(Q_1)}\le1\), whereas
\([D_v^2f_N]_{C_v^\alpha(Q_{1/2})}\to\infty\).
\end{remark}

\section{A zero-forcing variant}\label{sec:zero-forcing}

The preceding construction places the hidden spatial oscillation in the forcing term.  It can instead be placed in the zeroth-order coefficient.

\begin{proposition}\label{prop:zero-forcing}
Let \(d\ge1\) and \(\alpha\in(0,1)\).  There exist constants
\(C,c>0\), a number \(N_0\ge1\), and, for every frequency parameter
\(N\ge N_0\), smooth real-valued stationary solutions \(u_N\) of
\begin{equation}\label{eq:zero-equation}
   (\partial_t+v\cdot\nabla_x)u_N=\Delta_vu_N+c_Nu_N
   \qquad\text{in }Q_1,
\end{equation}
with zero forcing, such that
\[
   \frac12\le u_N\le\frac32,
   \qquad
   \|c_N\|_{L^\infty(Q_1)}\le C,
   \qquad
   [c_N]_{C_v^\alpha(Q_1)}\le C N^{(\alpha-2)/3},
\]
while
\[
   [D_v^2u_N]_{C_v^\alpha(Q_{1/2})}\ge cN^{\alpha/3}.
\]
In particular, the exponent \((\alpha-2)/3\) is negative, so the
velocity H\"older seminorm of \(c_N\) tends to zero as \(N\to\infty\),
while the velocity H\"older seminorm of \(D_v^2u_N\) diverges.  Thus
the velocity-only estimate fails even when \(g\equiv0\).
\end{proposition}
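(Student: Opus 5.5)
Starting from Theorem~\ref{thm:main}, we set $u_N:=1+f_N$ and $c_N:=-g_N/u_N$. Here $f_N$ and $g_N=\cos(Nx_1)$ are the functions constructed in that proof. By \eqref{eq:fN-Linfty}, $\|f_N\|_{L^\infty}\le C_0N^{-2/3}$, so we take $N_0$ large enough (and at least the $N_0$ of Theorem~\ref{thm:main}) that $C_0N_0^{-2/3}\le 1/2$. This gives $\frac12\le u_N\le\frac32$. The constant $1$ is annihilated by both $v\cdot\nabla_x$ and $\Delta_v$, so
\[
   (\partial_t+v\cdot\nabla_x)u_N-\Delta_vu_N=g_N=-c_Nu_N\cdot(-1)\,,
\]
that is, $(\partial_t+v\cdot\nabla_x)u_N=\Delta_v u_N+g_N$. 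To get the form \eqref{eq:zero-equation} we instead define $c_N:=g_N/u_N$, so that $c_Nu_N=g_N$. Then \eqref{eq:zero-equation} holds with zero forcing, and $c_N$ is smooth because $u_N\ge\frac12$. The bound $\|c_N\|_{L^\infty}\le 2$ is immediate. Since $D_v^2u_N=D_v^2f_N$, the lower bound $[D_v^2u_N]_{C_v^\alpha(Q_{1/2})}\ge c_0N^{\alpha/3}$ is inherited directly from \eqref{eq:main-bounds}.

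The only real work is the velocity seminorm of $c_N$. Because $g_N$ does not depend on $v$, for fixed $(t,x)$ we have
\[
   c_N(x,v)-c_N(x,v')=g_N(x)\,\frac{u_N(x,v')-u_N(x,v)}{u_N(x,v)u_N(x,v')},
\]
and therefore $|c_N(x,v)-c_N(x,v')|\le 4|f_N(x,v)-f_N(x,v')|$. The task is thus to show $[f_N]_{C_v^\alpha}\lesssim N^{(\alpha-2)/3}$, which is where the scaling \eqref{eq:I-J-scaling} enters. Since $I_N(\eta)=N^{-2/3}J(N^{1/3}\eta)$, we have
\[
   |I_N(\eta)-I_N(\eta')|\le N^{-2/3}\,[J]_{C^\alpha(\R)}\,N^{\alpha/3}|\eta-\eta'|^\alpha ,
\]
and the same bound holds for $f_N=\operatorname{Re}(\e^{\ii Nx_1}I_N)$.

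It remains to check that $J$ is globally $C^\alpha$. We have $|J|\le C_0$, and $|J'(y)|\le\int_0^\infty r\e^{-r^3/3}\dd r<\infty$ by differentiating \eqref{eq:J-def} under the integral. Interpolating between these bounded-difference and Lipschitz estimates gives $|J(y)-J(y')|\le C\min(1,|y-y'|)\le C|y-y'|^\alpha$. Combining everything yields $[c_N]_{C_v^\alpha(Q_1)}\le CN^{(\alpha-2)/3}$.

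I do not expect a real obstacle here. The points that need care are fixing the sign convention for $c_N$ so that the equation reads exactly as \eqref{eq:zero-equation}, and choosing $N_0$ large enough for the positivity bound $u_N\ge\frac12$ while keeping the bound from Theorem~\ref{thm:main} valid.
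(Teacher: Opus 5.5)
Your proposal is correct and follows the paper's proof essentially verbatim: $u_N=1+f_N$, $c_N=g_N/u_N$, the bound $|c_N(t,x,v)-c_N(t,x,v')|\le 4|f_N(t,x,v)-f_N(t,x,v')|$, and the scaling $I_N(\eta)=N^{-2/3}J(N^{1/3}\eta)$ with boundedness of $J$ and $J'$ to get $[c_N]_{C_v^\alpha(Q_1)}\lesssim N^{(\alpha-2)/3}$, while $D_v^2u_N=D_v^2f_N$ gives the lower bound. In a final write-up, drop the false start $c_N:=-g_N/u_N$, and add the one-line contradiction the paper gives for the last sentence: fix $\Lambda\ge 2$; then the right side of \eqref{eq:false-estimate} stays bounded, since $\|u_N\|_{L^\infty}\le 3/2$ and $[c_N]_{C_v^\alpha(Q_1)}\to 0$, while the left side diverges.
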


\begin{proof}
Let $f_N$ and $g_N$ be the functions constructed in Theorem~\ref{thm:main}.  For $N$ sufficiently large, \eqref{eq:fN-Linfty} gives $\norm{f_N}_{L^\infty}\le1/2$.  Define
\[
   u_N=1+f_N,
   \qquad
   c_N=\frac{g_N}{1+f_N}.
\]
Then $1/2\le u_N\le3/2$ and $\norm{c_N}_{L^\infty}\le2$.  Since
\[
   (\partial_t+v\cdot\nabla_x)f_N=\Delta_vf_N+g_N,
\]
we have
\[
   (\partial_t+v\cdot\nabla_x)u_N=\Delta_vu_N+c_Nu_N.
\]

It remains to estimate the velocity H\"older seminorm of \(c_N\).
Because \(g_N\) is independent of \(v\), for fixed \((t,x)\) and
\(v,v'\in B_1\),
\[
\begin{aligned}
   |c_N(t,x,v)-c_N(t,x,v')|
   &= |g_N(x)|
      \left|
      \frac1{1+f_N(t,x,v)}
      -
      \frac1{1+f_N(t,x,v')}
      \right|  \\
   &\le 4|f_N(t,x,v)-f_N(t,x,v')|.
\end{aligned}
\]
Hence
\begin{equation}\label{eq:c-holder-f-holder}
   [c_N]_{C_v^\alpha(Q_1)}
   \le 4[f_N]_{C_v^\alpha(Q_1)}.
\end{equation}
Using the scaling
\[
   I_N(\eta)=N^{-2/3}J(N^{1/3}\eta),
\]
and the boundedness of \(J\) and \(J'\), we obtain
\[
   [I_N]_{C^\alpha(\mathbb R)}
   \le N^{-2/3}N^{\alpha/3}[J]_{C^\alpha(\mathbb R)}
   \le C_\alpha N^{(\alpha-2)/3}.
\]
Therefore
\[
   [f_N]_{C_v^\alpha(Q_1)}
   \le C_\alpha N^{(\alpha-2)/3},
\]
and \eqref{eq:c-holder-f-holder} gives
\[
   [c_N]_{C_v^\alpha(Q_1)}
   \le C_\alpha N^{(\alpha-2)/3}.
\]
Since \((\alpha-2)/3<0\), this seminorm tends to zero as \(N\to\infty\).

Finally, $D_v^2u_N=D_v^2f_N$, so Theorem~\ref{thm:main} gives
\[
   [D_v^2u_N]_{C_v^\alpha(Q_{1/2})}\ge cN^{\alpha/3}.
\]
If the velocity-only estimate \eqref{eq:false-estimate} were true with $g\equiv0$, choose a fixed $\Lambda>1$ large enough that $|c_N|\le\Lambda$ for all large $N$.  Then its right-hand side would be uniformly bounded for this sequence, because $\|u_N\|_{L^\infty}$ is bounded and $[c_N]_{C_v^\alpha(Q_1)}\to0$, while its left-hand side diverges.  This contradiction proves the proposition.
\end{proof}

\section{A bounded-forcing \texorpdfstring{$L^\infty$}{L-infinity} endpoint obstruction}\label{sec:Linfty-endpoint}

The preceding examples disprove the precise velocity H\"older estimate in
Conjecture~\ref{conj:false}.  In dimensions \(d\ge2\), one can see an even
more elementary endpoint obstruction: bounded {(and even continuous)} forcing, independent of
velocity, does not in general produce bounded second velocity derivatives.
The mechanism is the classical failure of second-order Riesz transforms to
map \(L^\infty\) to \(L^\infty\).

We state the result first in a smooth approximate form.  This keeps the
solutions classical while showing that {there is} no uniform \(L^\infty\) estimate for
\(D_v^2f\) {which depends} only on \(\|g\|_{L^\infty}\) and \(\|f\|_{L^\infty}\),
when no spatial modulus of \(g\) is assumed.  We then record actual bounded
forcing examples whose velocity Hessians are not locally bounded.

\begin{theorem}\label{prop:Linfty-endpoint-smooth}
Let \(d\ge2\).  There exist constants \(C,c>0\), a family
\(g_\varepsilon{=g_\varepsilon(x)}\in C_c^\infty(\R^d)\), \(0<\varepsilon<1/10\), and smooth
stationary solutions \(f_\varepsilon\) of
\begin{equation}\label{eq:Linfty-endpoint-equation}
   (\partial_t+v\cdot\nabla_x)f_\varepsilon
   =\Delta_v f_\varepsilon+g_\varepsilon
   \qquad\text{in }\R_t\times\R_x^d\times\R_v^d,
\end{equation}
where \(g_\varepsilon\) is independent of \(v\), such that
\[
   \|g_\varepsilon\|_{L^\infty(\R^d)}\le1,
   \qquad
   \|f_\varepsilon\|_{L^\infty(\R^d\times\R^d)}\le C,
\]
but
\[
   \|D_v^2f_\varepsilon\|_{L^\infty(B_{1/4}\times B_{1/4})}
   \ge c\log\frac1\varepsilon.
\]
\end{theorem}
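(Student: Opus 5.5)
The plan is to reduce everything to the explicit stationary solution operator of the Kolmogorov equation, which on Fourier modes in \(x\) is given by the same integral as \(I_N\) in the proof of Theorem~\ref{thm:main}. At \(v=0\) this operator turns the velocity Hessian into a second-order Riesz transform of \(g\). The logarithmic blow-up then comes from the classical failure of \(R_1R_2\) to be bounded on \(L^\infty\).

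\textbf{Step 1: the solution formula.} For \(g\in C_c^\infty(\R^d)\) depending only on \(x\), I define
\[
   f(x,v)=\int_0^\infty \big(P_s g\big)(x,v)\dd s ,
\]
where \(P_s\) is the Kolmogorov semigroup associated with \(v\cdot\nabla_x-\Delta_v\). Concretely, \(P_sg(x,v)\) is the average of \(g\) over a Gaussian in \(x\) centred at \(x-sv\) with covariance comparable to \(s^3\Id\). Two bounds on \(P_sg\) are immediate: \(|P_sg|\le\|g\|_{L^\infty}\), and \(|P_sg|\le C\|g\|_{L^1}s^{-3d/2}\). The exponent \(3d/2\) exceeds \(1\) for every \(d\ge1\), so the integral converges. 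If moreover \(\operatorname{supp}g\subset B_1\) and \(\|g\|_{L^\infty}\le1\), then
\[
   \|f\|_{L^\infty}\le\int_0^\infty\min\big(1,C|B_1|s^{-3d/2}\big)\dd s\le C,
\]
uniformly in the family. Smoothness of \(f\) and the equation \eqref{eq:Linfty-endpoint-equation} follow by differentiating under the integral and using \(\partial_sP_sg=-(v\cdot\nabla_x-\Delta_v)P_sg\) together with \(P_sg\to0\) as \(s\to\infty\).

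\textbf{Step 2: the Fourier side.} On the Fourier side in \(x\), the mode \(\hat g(\xi)\e^{\ii\xi\cdot x}\) produces \(\hat g(\xi)\e^{\ii\xi\cdot x}I_\xi(v)\), where
\[
   I_\xi(v)=\int_0^\infty \exp\!\Big(-\tfrac{|\xi|^2s^3}{3}\Big)\e^{-\ii s\xi\cdot v}\dd s .
\]
This is exactly the computation in Theorem~\ref{thm:main}, with \(x_1\) replaced by the direction \(\xi/|\xi|\). Differentiating twice in \(v\) and setting \(v=0\) gives
\[
   D_v^2I_\xi(0)=-\frac{\xi\otimes\xi}{|\xi|^2}\int_0^\infty|\xi|^2s^2\e^{-|\xi|^2s^3/3}\dd s=-\frac{\xi\otimes\xi}{|\xi|^2}.
\]
This identity is \eqref{eq:J-zero} in disguise, and it is independent of \(|\xi|\). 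Hence \(\partial_{v_iv_j}f(x,0)=R_iR_jg(x)\), up to a harmless sign convention. To justify the interchange of the \(\xi\)-integral with the \(v\)-derivatives, I would use that \(\hat g\) is Schwartz and that \(|\xi|^{-2/3}\) is locally integrable in \(\xi\) for \(d\ge1\). Alternatively, one can compute \(\partial_{v_iv_j}P_sg(x,0)\) directly and integrate in \(s\).

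\textbf{Step 3: the log-divergent family.} Let \(\chi\in C_c^\infty(B_1)\) with \(0\le\chi\le1\) and \(\chi=1\) on \(B_{1/2}\). Take \(h(x)=\chi(x)\,x_1x_2/|x|^2\), which is bounded but discontinuous at the origin, and set \(g_\varepsilon=\eta_\varepsilon*h\) for a standard mollifier \(\eta_\varepsilon\). Then \(g_\varepsilon\in C_c^\infty(B_2)\) and \(\|g_\varepsilon\|_{L^\infty}\le1\), so Step 1 applies after a harmless rescaling of the support radius. The kernel of \(R_1R_2\) is \(c_d\,x_1x_2/|x|^{d+2}\), with no principal-value correction for \(i\ne j\). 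Pairing this kernel with \(g_\varepsilon\) near the origin gives
\[
   |R_1R_2g_\varepsilon(0)|\ge c\int_{C\varepsilon<|y|<1/2}\frac{y_1^2y_2^2}{|y|^{d+4}}\dd y-C\ge c\log\frac1\varepsilon-C .
\]
Here the errors come from three sources: the mollification region \(|y|\lesssim\varepsilon\), handled by cancellation of the odd kernel against the smooth \(g_\varepsilon\); the region \(|y|>1/2\); and the cutoff \(\chi\). Each of these is \(O(1)\). Since \(\partial_{v_1v_2}f_\varepsilon(0,0)=\pm R_1R_2g_\varepsilon(0)\), the lower bound on \(B_{1/4}\times B_{1/4}\) follows.

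The main obstacle is making Step 3 rigorous with constants uniform in \(\varepsilon\). This requires a careful splitting of the singular integral into the \(\varepsilon\)-scale, the intermediate annulus, and the far region, together with the rigorous identification in Step 2 of \(D_v^2f(\cdot,0)\) with \(R_iR_jg\).
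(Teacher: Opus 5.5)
Your proposal is correct. Steps 1 and 2 are essentially the paper's argument. The paper uses the same solution $f_\varepsilon(x,v)=\int_0^\infty (P_{s^3/3}g_\varepsilon)(x-sv)\,ds$ with $P_\tau=e^{\tau\Delta_x}$ and the same two heat-kernel bounds. It obtains $\partial_{v_1v_2}f_\varepsilon(\cdot,0)=\partial_{x_1x_2}(-\Delta_x)^{-1}g_\varepsilon$ by your ``alternative'' route: differentiate under the $s$-integral and substitute $\tau=s^3/3$, rather than going through Fourier modes.

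The genuine difference is the choice of forcing in Step 3.

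\textbf{The paper's choice.} It takes $g_\varepsilon=\chi(x)\sigma(x_1/\varepsilon)\sigma(x_2/\varepsilon)$, where $\sigma$ is a smooth odd approximation of the sign function with $s\sigma(s)\ge0$. This is engineered so that $K_{12}g_\varepsilon\ge0$ pointwise, and the product is absolutely integrable near the origin. The lower bound then follows at once by discarding everything except a fixed angular cone times the annulus $4\varepsilon<|y|<1/4$. There are no error terms to control.

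\textbf{Your choice.} The mollified $0$-homogeneous function $\chi\,x_1x_2/|x|^2$ is the classical example of $R_1R_2$ failing on $L^\infty$. It needs no sign structure from the mollifier, but it costs a genuine decomposition.
\begin{itemize}
\item Besides the three error sources you list, there is a fourth, on the intermediate annulus. Replacing $g_\varepsilon$ by $h$ there incurs $|g_\varepsilon-h|\le C\varepsilon/|y|$ for $|y|\ge2\varepsilon$, since $|\nabla h|\lesssim|y|^{-1}$. Against $|K_{12}|\lesssim|y|^{-d}$ this contributes $O(1)$. It should be stated explicitly.
\item The near region is $O(1)$ because $\|\nabla g_\varepsilon\|_\infty\lesssim\varepsilon^{-1}$ and $K_{12}$ has vanishing spherical means.
\item Your bound $c\log(1/\varepsilon)-C$ gives the stated inequality only for small $\varepsilon$. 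To cover the whole range $0<\varepsilon<1/10$ with a single $c$, run your construction at parameter $\delta\varepsilon$ for a fixed small $\delta$.
\end{itemize}
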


\begin{proof}
Choose \(\chi\in C_c^\infty(B_1)\), \(0\le\chi\le1\), with
\(\chi\equiv1\) on \(B_{1/2}\).  Choose an odd function
\(\sigma\in C^\infty(\R)\) such that \(|\sigma|\le1\),
\(s\sigma(s)\ge0\), and \(\sigma(s)=\operatorname{sgn}(s)\) for
\(|s|\ge1\).  For \(0<\varepsilon<1/10\), set
\begin{equation}\label{eq:g-eps-def}
   g_\varepsilon(x)
   :=\chi(x)\sigma(x_1/\varepsilon)\sigma(x_2/\varepsilon).
\end{equation}
Then \(g_\varepsilon\in C_c^\infty(\R^d)\), \(\|g_\varepsilon\|_{L^\infty}\le1\),
and \(g_\varepsilon\) is independent of \(v\).

Let \(P_\tau=e^{\tau\Delta_x}\) be the heat semigroup in the \(x\)-variable,
and define
\begin{equation}\label{eq:f-eps-resolvent}
   f_\varepsilon(x,v)
   :=\int_0^\infty
      \big(P_{s^3/3}g_\varepsilon\big)(x-sv)\,\dd s.
\end{equation}
This integral is uniformly bounded in \(\varepsilon\).  Indeed,
\[
   \|P_{s^3/3}g_\varepsilon\|_{L^\infty}
   \le
   \min\big\{\|g_\varepsilon\|_{L^\infty},
          C_d s^{-3d/2}\|g_\varepsilon\|_{L^1}\big\},
\]
and \(\|g_\varepsilon\|_{L^1}\le |B_1|\).  Since
\(\int_0^\infty \min\{1,s^{-3d/2}\}\,\dd s<\infty\),
\[
   \|f_\varepsilon\|_{L^\infty(\R^d\times\R^d)}\le C.
\]
The same heat-kernel bounds justify differentiating under the integral sign.

We next verify the equation.  Set
\[
   G_s(x,v):=\big(P_{s^3/3}g_\varepsilon\big)(x-sv).
\]
Then
\[
   \partial_sG_s
   =s^2\Delta_x P_{s^3/3}g_\varepsilon(x-sv)
    -v\cdot\nabla_x P_{s^3/3}g_\varepsilon(x-sv),
\]
while
\[
   \Delta_vG_s
   =s^2\Delta_x P_{s^3/3}g_\varepsilon(x-sv).
\]
Hence
\[
   \partial_sG_s=\Delta_vG_s-v\cdot\nabla_xG_s.
\]
Integrating in \(s\), using \(G_s\to0\) as \(s\to\infty\) and
\(G_0=g_\varepsilon\), gives
\[
   \Delta_vf_\varepsilon-v\cdot\nabla_xf_\varepsilon=-g_\varepsilon,
\]
which is equivalent to \eqref{eq:Linfty-endpoint-equation}. It remains to prove the lower bound for the velocity Hessian.  At \(v=0\),
\[
\begin{aligned}
   \partial_{v_1v_2}f_\varepsilon(x,0)
   &=\int_0^\infty
      s^2\partial_{x_1x_2}P_{s^3/3}g_\varepsilon(x)\,\dd s  \\
   &=\int_0^\infty
      \partial_{x_1x_2}P_\tau g_\varepsilon(x)\,\dd\tau,
\end{aligned}
\]
where \(\tau=s^3/3\).  Therefore
\begin{equation}\label{eq:riesz-identity}
   \partial_{v_1v_2}f_\varepsilon(\cdot,0)
   =\partial_{x_1x_2}(-\Delta_x)^{-1}g_\varepsilon,
\end{equation}
where \(\partial_{x_1x_2}(-\Delta_x)^{-1}\) is understood as the
second-order Riesz transform.  For the mixed derivative, the kernel of
\(\partial_{x_1x_2}(-\Delta_x)^{-1}\) is
\begin{equation}\label{eq:riesz-kernel}
   K_{12}(y)=c_d\frac{y_1y_2}{|y|^{d+2}},
   \qquad y\ne0,
\end{equation}
with \(c_d>0\).  For the particular function \(g_\varepsilon\) in
\eqref{eq:g-eps-def}, the product \(K_{12}g_\varepsilon\) is locally
integrable at the origin.  Hence the value at the origin is finite and
can be computed from this kernel:
\[
   \partial_{v_1v_2}f_\varepsilon(0,0)
   =\int_{\R^d}K_{12}(y)g_\varepsilon(y)\,\dd y.
\]
The integrand is nonnegative, because
\(y_1y_2\sigma(y_1/\varepsilon)\sigma(y_2/\varepsilon)\ge0\).  On the fixed
angular cone
\[
   \Omega:=\{\omega\in S^{d-1}: |\omega_1|\ge1/4,\ |\omega_2|\ge1/4\},
\]
which has positive surface measure, and for
\(4\varepsilon<r<1/4\), we have \(\chi(r\omega)=1\) and
\(\sigma(r\omega_j/\varepsilon)=\operatorname{sgn}(\omega_j)\) for
\(j=1,2\).  Hence
\[
\begin{aligned}
   \partial_{v_1v_2}f_\varepsilon(0,0)
   &\ge
   c\int_{4\varepsilon}^{1/4}\int_{\Omega}
      \frac{|r\omega_1\,r\omega_2|}{r^{d+2}}
      r^{d-1}\,\dd\omega\,\dd r  \\
   &\ge c\int_{4\varepsilon}^{1/4}\frac{\dd r}{r}
    \ge c\log\frac1\varepsilon.
\end{aligned}
\]
This proves the claimed lower bound for \(\|D_v^2f_\varepsilon\|_{L^\infty}\).
\end{proof}

\begin{remark}[An unbounded-Hessian example]\label{rem:Linfty-unbounded}
The smooth sequence above can be viewed as an approximation to an actual
bounded-forcing example with unbounded Hessian.  Let
\[
   g(x)=\chi(x)\operatorname{sgn}(x_1x_2),
\]
with the same cutoff \(\chi\), and define
\[
   f(x,v)=\int_0^\infty \big(P_{s^3/3}g\big)(x-sv)\,\dd s.
\]
Then \(g\in L^\infty\cap L^1\), \(g\) is independent of \(v\), and the same
heat-kernel estimate gives \(f\in L^\infty\).  The equation
\[
   (\partial_t+v\cdot\nabla_x)f=\Delta_vf+g
\]
holds in the sense of distributions {and also in the strong sense}.  Moreover,
\[
   \partial_{v_1v_2}f(\cdot,0)
   =\partial_{x_1x_2}(-\Delta_x)^{-1}g
\]
in the sense of distributions.  The kernel representation
\eqref{eq:riesz-kernel} shows that, at the level of truncated singular
integrals, the contribution near the origin is comparable on a cone to
\(|y|^{-d}\), whose radial integral is logarithmically divergent.  Equivalently,
the smooth approximants \(g_\varepsilon\) above converge pointwise to \(g\) away
from the coordinate axes, while the corresponding values
\(\partial_{v_1v_2}f_\varepsilon(0,0)\) grow like \(\log(1/\varepsilon)\).
Thus \(D_v^2f\notin L^\infty_{\mathrm{loc}}\).  This is a stronger endpoint
failure than the one needed for Conjecture~\ref{conj:false}, but it relies on a
rough bounded forcing and, unlike Theorem~\ref{thm:main}, requires at least two
active spatial directions.

\end{remark}

\begin{theorem}[Another endpoint example]\label{prop:potential-endpoint}
Let \(d\ge2\).  There exist \(g\in {C_c}(\R^d)\), independent of
\(v\), and a bounded distributional stationary solution \(f\) of
\[
   (\partial_t+v\cdot\nabla_x)f=\Delta_v f+g
   \qquad\text{in }\R_t\times\R_x^d\times\R_v^d,
\]
such that \(D_v^2f\notin L^\infty_{\mathrm{loc}}\).  Moreover, there are {universal constants $C,c>0$ and}
smooth compactly supported forcings \(g_\varepsilon\), independent of \(v\),
with \(\|g_\varepsilon\|_{L^\infty}\le C\), and smooth bounded stationary
solutions \(f_\varepsilon\), such that
\[
   \|f_\varepsilon\|_{L^\infty}\le C,
   \qquad
   \|D_v^2f_\varepsilon\|_{L^\infty}\ge c\left(\log\frac1\varepsilon\right)^{1/2}.
\]
\end{theorem}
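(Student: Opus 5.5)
We plan to reuse the resolvent formula \eqref{eq:f-eps-resolvent} from the proof of Theorem~\ref{prop:Linfty-endpoint-smooth}, now applied to a \emph{continuous} forcing. The starting point is the classical potential-theoretic example of a compactly supported function $w$ whose Laplacian is continuous but whose mixed Hessian is unbounded. We take
\[
   w(x)=\chi(x)\,x_1x_2\,\big(\log(1/|x|)\big)^{1/2}
\]
near the origin, with $\chi$ the cutoff from before. A direct computation shows that $\Delta_x w$ is continuous and compactly supported. Indeed, $x_1x_2$ is harmonic, and the derivatives falling on the logarithm produce terms of size $(\log(1/|x|))^{-1/2}\to0$ at the origin. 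By contrast, $\partial_{x_1x_2}w(0^+)\sim(\log(1/|x|))^{1/2}\to\infty$. We then set $g:=-\Delta_x w\in C_c(\R^d)$ and
\[
   f(x,v):=\int_0^\infty (P_{s^3/3}g)(x-sv)\dd s.
\]

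\textbf{Equation and boundedness.} Exactly as in Theorem~\ref{prop:Linfty-endpoint-smooth}, the estimate $\|P_\tau g\|_{L^\infty}\le\min\{\|g\|_\infty,C\tau^{-d/2}\|g\|_{L^1}\}$ gives $f\in L^\infty$, because $d\ge2$ makes $s^{-3d/2}$ integrable at infinity. The computation $\partial_sG_s=\Delta_vG_s-v\cdot\nabla_xG_s$, integrated in $s$, gives the equation. For a merely continuous $g$, we first verify it for smooth mollifications $g*\rho_\delta$ and then pass to the limit in distributions, using the uniform $L^\infty$ bounds and the convergence of $f$.

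\textbf{Unbounded Hessian.} As in \eqref{eq:riesz-identity}, at $v=0$ we have
\[
   \partial_{v_1v_2}f(\cdot,0)=\partial_{x_1x_2}(-\Delta_x)^{-1}g=\partial_{x_1x_2}w .
\]
Here we use that $w$ is compactly supported, so $w$ and the Newtonian potential of $g$ agree up to a harmonic function decaying at infinity, which must vanish when $d\ge3$. For $d=2$ we should instead state the identity at the level of second derivatives, where it holds by Fourier multipliers. More generally, translating gives
\[
   \partial_{v_1v_2}f(x,v)=\int_0^\infty\big(P_{s^3/3}\partial_{x_1x_2}w\big)(x-sv)\,s^2\dd s ,
\]
which ties the velocity Hessian to $\partial_{x_1x_2}w$. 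To get $D_v^2f\notin L^\infty_{\mathrm{loc}}$ as a genuine function statement, rather than a statement on the slice $v=0$, the cleanest route is through the approximants. For $\varepsilon\to0$ we take
\[
   w_\varepsilon:=\chi\,x_1x_2\,\big(\log(1/(|x|^2+\varepsilon^2)^{1/2})\big)^{1/2},
   \qquad g_\varepsilon:=-\Delta w_\varepsilon ,
\]
with $f_\varepsilon$ defined by the same formula. Then $\|g_\varepsilon\|_\infty\le C$ uniformly, $\|f_\varepsilon\|_\infty\le C$, and
\[
   \partial_{v_1v_2}f_\varepsilon(0,0)=\partial_{x_1x_2}w_\varepsilon(0)\approx(\log(1/\varepsilon))^{1/2},
\]
which is the quantitative statement.

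\textbf{Passing to the limit.} For the limit $f$, suppose $D_v^2f$ were bounded by $M$ on a neighborhood of a point $(x_0,0)$. Since $g_\varepsilon\to g$ uniformly, $f_\varepsilon\to f$ uniformly, and hence $D_v^2f_\varepsilon\to D_v^2f$ in distributions. Averaging $\partial_{v_1v_2}f$ against a smooth bump concentrated near $(0,0)$ then contradicts the logarithmic growth, because $\partial_{x_1x_2}w$ is locally integrable but unbounded on every neighborhood of $0$. Equivalently, the continuous function $\partial_{v_1v_2}f(\cdot,0)=\partial_{x_1x_2}w$ blows up at $0$, and continuity of $\partial_{v_1v_2}f$ in $v$ away from the singular set transfers this blow-up to full neighborhoods.

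\textbf{Main obstacle.} The delicate step is making rigorous the identification $\partial_{v_1v_2}f(\cdot,0)=\partial_{x_1x_2}w$ together with the locality argument. This is cleanest using $\partial_{x_1x_2}P_\tau g=-P_\tau\partial_{x_1x_2}\Delta w$ and integrating the heat semigroup in $\tau$, which gives $\int_0^\infty(-\Delta)P_\tau h\,\dd\tau=h-\lim_{\tau\to\infty}P_\tau h=h$ for $h=\partial_{x_1x_2}w\in L^1$. The other point requiring care is checking that the Laplacian of the double-logarithm profile is indeed continuous, with all error terms of size $(\log)^{-1/2}$.
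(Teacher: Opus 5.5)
Your argument is correct and shares the paper's core: the same potential $x_1x_2(\log(1/|x|))^{1/2}$, the same resolvent formula, and the identity $\partial_{v_1v_2}f(\cdot,0)=\partial_{x_1x_2}(-\Delta_x)^{-1}g$. It differs in three technical points.

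\emph{The approximants.} The paper sets $g_\varepsilon=\rho_\varepsilon*g$. This gives $\|g_\varepsilon\|_{L^\infty}\le\|g\|_{L^\infty}$ for free. The cost is a lower bound for $\rho_\varepsilon*\partial_{x_1x_2}u$ at $0$, which uses the positivity and asymptotics of $\partial_{x_1x_2}u$. You instead regularize the profile through $|x|^2+\varepsilon^2$. This gives the exact value $\partial_{x_1x_2}w_\varepsilon(0)=(\log(1/\varepsilon))^{1/2}$, but you must redo the Laplacian computation uniformly in $\varepsilon$. It does work: near the origin every term is bounded by a constant times $\frac{|x|^2}{|x|^2+\varepsilon^2}\big(\log\frac{1}{\sqrt{|x|^2+\varepsilon^2}}\big)^{-1/2}$. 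Take $\varepsilon$ small relative to $\operatorname{supp}\chi$ so the logarithm stays bounded below.

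\emph{The identification.} The paper gets $(-\Delta_x)^{-1}g=-u$ from decay at infinity and the maximum principle; in $d=2$ this silently uses $\int g=0$. Your identity $\int_0^\infty(-\Delta)P_\tau h\,\dd\tau=h$, with $h=\partial_{x_1x_2}w\in L^1$, needs no case distinction in $d$ and holds pointwise at each $x\neq0$. It does need the distributional $\partial_{x_1x_2}w$ to equal the pointwise one, which holds since $w\in C^1$ and $w$ is $C^2$ off the origin.

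\emph{The slice issue.} You correctly notice that unboundedness on the null set $\{v=0\}$ does not by itself give $D_v^2f\notin L^\infty_{\mathrm{loc}}$; the paper passes over this. Your continuity argument is the right fix, but it needs \emph{joint} continuity of $\partial_{v_1v_2}f$ in $(x,v)$ near $(x_0,0)$ with $x_0\neq0$, not just continuity in $v$. This follows from hypoellipticity, since $g$ is smooth on $\{x\neq0\}$, or directly from the integral formula. The bump-averaging version does not work as written: for a fixed test function $\varphi$, the limit of $\int\partial_{v_1v_2}f_\varepsilon\,\varphi$ is finite.

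Finally, your ``translated'' formula drops a factor: it should read $\int_0^\infty(-\Delta_xP_{s^3/3}\partial_{x_1x_2}w)(x-sv)\,s^2\,\dd s$. As written, at $v=0$ it would give $(-\Delta)^{-1}\partial_{x_1x_2}w$ rather than $\partial_{x_1x_2}w$, contradicting your own identity. Nothing else in your argument depends on it.
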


\begin{proof}
Let \(\eta\in C_c^\infty(B_{1/4})\) satisfy \(0\le\eta\le1\) and
\(\eta\equiv1\) on \(B_{1/8}\).  For \(0<|x|<1/4\), 
define
\begin{equation}\label{eq:potential-u-def}
   u(x):=\eta(x)x_1x_2 (-\log |x|)^{1/2},
\end{equation}
with \(u(0)=0\).  We claim that
\[
   g:=\Delta_x u
\]
is a bounded {continuous} compactly supported function, whereas \(\partial_{x_1x_2}u\) is
unbounded near the origin.

Indeed, near the origin, where \(\eta\equiv1\), write
\[
   r=|x|,
   \qquad
   \phi(r)=(-\log r)^{1/2}.
\]
Thus \(u=x_1x_2\phi\) near the origin.  We first show that
\(g=\Delta_xu\) is bounded {and continuous}.  The polynomial \(x_1x_2\) is harmonic and homogeneous
of degree two.  Hence
\begin{equation}\label{eq:laplacian-Pphi}
   \Delta_x(x_1x_2\phi(r))
   =x_1x_2\left(\phi''(r)+\frac{d+3}{r}\phi'(r)\right)
\end{equation}
is continuous in $B_1\setminus{0}$.
Moreover
\[
   \phi'(r)=-\frac{1}{2r\phi(r)},
   \qquad
   \phi''(r)=\frac{1}{2r^2\phi(r)}
      -\frac{1}{4r^2\phi(r)^3}.
\]
Consequently
\[
   \phi''(r)+\frac{d+3}{r}\phi'(r)
   =-\frac{d+2}{2r^2\phi(r)}
    -\frac{1}{4r^2\phi(r)^3}.
\]
Using \(|x_1x_2|\le r^2/2\), we obtain
\[
   |\Delta_x (x_1x_2\phi(r))|
   \le \frac{C|x_1x_2|}{r^2\phi(r)}
       +\frac{C|x_1x_2|}{r^2\phi(r)^3}
   \le C\quad{\text{in}\,\,B_{1/4}\quad\text{and}\quad
\Delta_x (x_1x_2\phi(r))\to 0\quad\text{as}\,\,x\to 0}.
\]
The cutoff 
{function $\eta$ is supported in $B_{1/4}$}, so
\[
   g=\Delta_xu\in {C_c}(\R^d).
\]

We next show that the mixed {derivative} of \(u\) is unbounded.  For a radial
function \(\phi(r)\),
\[
   \partial_i\phi=\phi'(r)\frac{x_i}{r},
\]
and, for \(i\ne j\),
\[
   \partial_{ij}\phi
   =\left(\phi''(r)-\frac{\phi'(r)}{r}\right)\frac{x_ix_j}{r^2}.
\]
The product rule gives
\begin{align}
   \partial_{x_1x_2}(x_1x_2\phi)
   &=\phi
     +x_2\partial_{x_2}\phi
     +x_1\partial_{x_1}\phi
     +x_1x_2\partial_{x_1x_2}\phi \notag \\
   &=\phi
     +\frac{x_1^2+x_2^2}{r}\phi'(r)
     +\frac{x_1^2x_2^2}{r^2}
       \left(\phi''(r)-\frac{\phi'(r)}{r}\right).
   \label{eq:potential-mixed-expanded}
\end{align}
From the formulas for \(\phi'\) and \(\phi''\),
\[
   \phi''(r)-\frac{\phi'(r)}{r}
   =\frac{1}{r^2\phi(r)}-
     \frac{1}{4r^2\phi(r)^3}.
\]
Using \(x_1^2+x_2^2\le r^2\) and \(x_1^2x_2^2\le r^4\), the last two
terms in \eqref{eq:potential-mixed-expanded} are bounded by \(C\phi(r)^{-1}\).
Therefore
\begin{equation}\label{eq:potential-mixed-unbounded}
   \partial_{x_1x_2}\big(x_1x_2\phi(|x|)\big)
   =\phi(|x|)+O\big(\phi(|x|)^{-1}\big)
   \qquad\text{as }x\to0.
\end{equation}
Consequently
\[
   \partial_{x_1x_2}u(x)
   =(-\log|x|)^{1/2}+O\big((-\log|x|)^{-1/2}\big),
   \qquad x\to0,
\]
and \(\partial_{x_1x_2}u\notin L^\infty_{\mathrm{loc}}\).

Define
\begin{equation}\label{eq:potential-f-def}
   f(x,v):=\int_0^\infty \big(P_{s^3/3}g\big)(x-sv)\,\dd s,
\end{equation}
where \(P_\tau=e^{\tau\Delta_x}\).  As in the proof of
Proposition~\ref{prop:Linfty-endpoint-smooth}, the compact support and boundedness
of \(g\) imply \(f\in L^\infty\), and \(f\) solves
\[
   (\partial_t+v\cdot\nabla_x)f=\Delta_vf+g
\]
{in the strong sense}.  Also, at \(v=0\),
\[
   \partial_{v_1v_2}f(\cdot,0)
   =\partial_{x_1x_2}(-\Delta_x)^{-1}g.
\]
Since \(g=\Delta_xu\), the function \(-u\) solves
\(-\Delta_x(-u)=g\).  Thus \((-\Delta_x)^{-1}g=-u\), up to an additive
harmonic term. {Since $(-\Delta_x)^{-1}g(x)\to 0$ as $|x|\to \infty$, by the maximum principle, we see that \((-\Delta_x)^{-1}g=-u\).} 
Therefore
\[
   \partial_{v_1v_2}f(\cdot,0)=-\partial_{x_1x_2}u,
\]
which is unbounded near the origin.  This proves the first assertion. 

For the smooth version, let \(\rho_\varepsilon=\rho_\varepsilon(x)\) be a standard nonnegative
radial mollifier and set
\[
   u_\varepsilon=\rho_\varepsilon*u,
   \qquad
   g_\varepsilon=\Delta_xu_\varepsilon=\rho_\varepsilon*g.
\]
Then \(g_\varepsilon\in C_c^\infty\), \(\|g_\varepsilon\|_{L^\infty}\le
\|g\|_{L^\infty}\), and the corresponding solutions
\[
   f_\varepsilon(x,v)=\int_0^\infty
      \big(P_{s^3/3}g_\varepsilon\big)(x-sv)\,\dd s
\]
are uniformly bounded.  Moreover,
\[
   \partial_{v_1v_2}f_\varepsilon(\cdot,0)
   =-\partial_{x_1x_2}u_\varepsilon.
\]
Using \eqref{eq:potential-mixed-unbounded} and the positivity of the mollifier,
we get a quantitative lower bound.  Indeed, choose the standard mollifier
\(\rho\) so that \(\rho\ge c_\rho>0\) on \(B_{1/2}\).  By
\eqref{eq:potential-mixed-unbounded}, after decreasing \(\varepsilon\) if
necessary,
\[
   \partial_{x_1x_2}u(-y)
   \ge \frac12\left(\log\frac1{|y|}\right)^{1/2}
   \qquad\text{for }0<|y|<\varepsilon.
\]
Therefore, we see
\[
\begin{aligned}
   |\partial_{x_1x_2}u_\varepsilon(0)|
   &=\left|\int_{\R^d}\rho_\varepsilon(y)
        \partial_{x_1x_2}u(-y)\,\dd y\right| \\
   &\ge c\int_{|y|<\varepsilon/2}\rho_\varepsilon(y)
        \left(\log\frac1{|y|}\right)^{1/2}\,\dd y \\
   &\ge c\left(\log\frac1\varepsilon\right)^{1/2}
\end{aligned}
\]
for all sufficiently small \(\varepsilon\).  Hence, we see
\[
   \|D_v^2f_\varepsilon\|_{L^\infty}
   \ge c\left(\log\frac1\varepsilon\right)^{1/2},
\]
which proves the smooth approximation statement.
\end{proof}

\end{document}